\def\mathcenterto#1#2{\mathclap{\phantom{#1}\mathclap{#2}}\phantom{#1}}
\let\old@widetilde\widetilde
\def\widetildeto#1#2{\mathcenterto{#2}{\old@widetilde{\mathcenterto{#1}{#2\,}}}}
\newtheorem{theorem}{Theorem}
\newtheorem*{theorem*}{Theorem}
\newtheorem{conj}{Conjecture}
\newtheorem{prop}[theorem]{Proposition}
\renewcommand{\geq}{\geqslant}
\renewcommand{\leq}{\leqslant}
\newcommand{\C}{\mathbb{C}}
\newcommand{\R}{\mathbb{R}}
\newcommand{\eps}{\varepsilon}
\newcommand{\la}{\langle}
\newcommand{\ra}{\rangle}
\renewcommand{\u}{\widetilde{u}}
\newcommand{\vv}{\widetilde{v}}
\newcommand{\PP}{\mathbb{P}}
\renewcommand{\O}{\mathcal{O}}
\newcommand{\tr}{\mathrm{tr}}
\newcommand{\om}{\omega}
\newcommand{\U}{\widetilde{U}}
\newcommand{\M}{\widetildeto{N}{M}}
\DeclarePairedDelimiter\inner{\langle}{\rangle}
\DeclareMathOperator{\sgn}{sgn}
\DeclareMathOperator{\E}{\mathbb{E}}
\DeclareMathOperator{\Beta}{B}
\DeclareMathOperator{\im}{Im}
\begin{document}

\title[Polarization, sign sequences and isotropic vector systems]{Polarization, sign sequences and isotropic vector systems }

\author{Gergely Ambrus}
\address{
Gergely Ambrus\\
Alfr\'ed R\'enyi Institute of Mathematics, Hungarian Academy of Sciences\\
Reáltanoda u. 13-15\\
1053 Budapest\\
Hungary}
\email[G. Ambrus]{ambrus@renyi.hu}

\author{Sloan Nietert}
\address{
Sloan Nietert\\
Department of Computer Science\\
Cornell University\\
Ithaca, NY 14853-7501\\
U.S.A.
}
\email[S. Nietert]{sbn45@cornell.edu}

\thanks{Research of the first author was supported by NKFIH grants PD125502 and K116451 and by the Bolyai Research Scholarship of the Hungarian Academy of Sciences. Research of the second author was supported by Budapest Semesters in Mathematics and the Hungarian - American Fulbright Commission. }
\keywords{Polarization problems, discrete potentials, Chebyshev constants, isotropic vectors sets, tight frames, vector sums. }
\subjclass[2010]{52A40(primary), and 31C20(secondary)}

\maketitle

\begin{abstract}
We determine the order of magnitude of the $n$th $\ell_p$-polari{\-}za{\-}tion constant of the unit sphere $S^{d-1}$ for every $n,d \geq 1$ and $p>0$. For $p=2$, we prove that extremizers are isotropic vector sets, whereas for $p=1$, we show that the polarization problem is equivalent to that of maximizing the norm of signed vector sums. Finally, for $d=2$, we discuss the optimality of equally spaced configurations on the unit circle.
\end{abstract}

\section{Introduction}

Let $\om_n =  \{u_1, \ldots, u_n \}$ be a multiset of $n$ unit vectors in $\R^d$, and set $p>0$. The {\em $\ell_p$-potential} of $\om_n$ at the unit vector $v \in S^{d-1}$ is defined as
\begin{equation*} 
U^p (\om_n , v ) = \sum_{i=1}^n |\la v, u_i \ra|^p,
\end{equation*}
where $\la \cdot,\cdot \ra$ denotes the standard inner product. This is an analogue of the classical Riesz potential for inner products.  The  {\em $\ell_p$-polarization of} $\om_n$ is given by
\begin{equation*} 
M^p (\om_n) = \max_{v \in S^{d-1}} U^p (\om_n , v ).
\end{equation*}

We are interested in finding the minimum $\ell_p$-polarization of $\om_n \subset S^{d-1}$, for fixed $d$ and $n$, that is,
\begin{equation*}
M^p_n(S^{d-1}) = \min_{\om_n  \subset S^{d-1}} M^p (\om_n) = \min_{u_1, \ldots, u_n \in S^{d-1}}\max_{v \in S^{d-1}} \sum_{i=1}^n |\la v, u_i \ra|^p.
\end{equation*}
The quantity $M^p_n(S^{d-1})$ is called the {\em $n$th $\ell_p$-polarization (or Chebyshev) constant of $S^{d-1}$.}

Related questions for $p \leq 0$ have been studied extensively, see e.g. the recent article of Hardin, Petrache and Saff~\cite{HPS19} about general polarization problems.  In the planar case, $M^p_n(S^{1})$ has a direct connection to the classical notions of Riesz potentials and Chebyshev constants. This connection is described in Section~\ref{sec_planar}. Polarization problems have been subject to very active research in the last 15 years, although their study dates back to at least 1967~\cite{O67}.
The most relevant results to our present problem are discussed in~\cite{A09, NR12, NR13, S75a, S75b}.

Determining the exact value of $M^p_n(S^{d-1})$  is hopeless in general, except for certain cases. Therefore, our first result provides asymptotic bounds. For brevity, we introduce the quantity
\begin{equation*}
\mu_{d,p} = \frac{\Gamma\left(\frac{d}{2}\right)\Gamma\left(\frac{p+1}{2}\right)}{\sqrt{\pi}\,\Gamma\left(\frac{d+p}{2}\right)}\, .
\end{equation*}
Clearly, $\mu_{d,p} = \Theta(d^{-p/2})$. Here, and throughout the paper, we are going to use the standard asymptotic notations following Knuth \cite{K97}: given two positive-valued functions $f(n) > 0$ and $g(n)>0$, $n \in \mathbb{N}$, we write
\begin{alignat*}{2}
 f(n) &= \O(g(n)) &&\textrm{ if } \limsup_{n \rightarrow \infty} f(n)/g(n) < \infty; \\
 f(n) &= o(g(n)) &&\textrm{ if } \lim_{n \rightarrow \infty} f(n)/g(n) =0; \\
f(n) &= \Omega(g(n)) &&\textrm{ if } \liminf_{n \rightarrow \infty} f(n)/g(n) >0; \\
f(n) &= \omega(g(n)) &&\textrm{ if } \lim_{n \rightarrow \infty} f(n)/g(n) =\infty; \\
f(n) &= \Theta(g(n)) &&\textrm{ if } f(n) = \O(g(n)) \textrm{ and }  f(n) = \Omega(g(n)).
\end{alignat*}

\noindent
Depending on the number of points compared to the dimension, we derive different estimates.

\begin{theorem} \label{thm1}
For every $p>0$,
\[
M_n^p(S^{d-1})  =   n \mu_{d,p} + o(n d^{-p/2})
\]
as $d, n \rightarrow \infty$ and $n = \omega(d^{1 + p} \log d)$.

Furthermore, for $0<p \leq 2$,
\[
M_n^p(S^{d-1}) = \Theta (n d^{-p/2}),
\]
while for $p>2$,
\[
M_n^p(S^{d-1}) = \Omega(n \, d^{-p/2}) \textrm{ and }M_n^p(S^{d-1}) = \O(n \,d^{-1})
\]
holds, as $d, n \rightarrow \infty$ and $n \geq d$.
\end{theorem}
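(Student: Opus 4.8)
The plan is to combine three ingredients: a universal averaging lower bound, an explicit near-optimal construction from repeated orthonormal bases that fixes the order of magnitude, and a random-configuration argument that pins down the sharp leading constant in the first assertion. First I would record the lower bound. For a fixed unit vector $u$, rotational invariance gives $\EE_v |\la v,u\ra|^p = \mu_{d,p}$ when $v$ is uniform on $S^{d-1}$, since $\la v,u\ra$ has the law of a single coordinate and the resulting moment is the Beta integral defining $\mu_{d,p}$. Summing over $u_1,\dots,u_n$ and bounding the maximum below by the average,
\[
M^p(\om_n) = \max_{v\in S^{d-1}} U^p(\om_n,v) \ge \EE_v U^p(\om_n,v) = n\mu_{d,p}.
\]
As $\mu_{d,p} = \Theta(d^{-p/2})$, this already gives $M_n^p(S^{d-1}) \ge n\mu_{d,p} = \Omega(nd^{-p/2})$ for every $p>0$, which is the lower bound in all three assertions and the exact leading term in the first.

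For the matching deterministic upper bounds I would use repeated orthonormal bases. Writing $n = qd+r$ with $0\le r<d$, let $\om_n$ consist of $q$ copies of an orthonormal basis $e_1,\dots,e_d$ together with $e_1,\dots,e_r$. Then $U^p(\om_n,v) \le (q+1)\sum_{j=1}^d |v_j|^p$ for $v=(v_1,\dots,v_d)$, so the task reduces to maximizing $\sum_{j=1}^d |v_j|^p$ over $S^{d-1}$. By Hölder's inequality this maximum equals $d^{1-p/2}$ for $0<p\le 2$ (attained at $v_j = \pm d^{-1/2}$) and equals $1$ for $p\ge 2$ (attained at a coordinate vector). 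Since $q+1\le 2n/d$ once $n\ge d$, I obtain $M_n^p \le 2nd^{-p/2}$ for $0<p\le 2$ and $M_n^p \le 2nd^{-1}$ for $p>2$, which completes the $\Theta(nd^{-p/2})$ claim for $p\le 2$ and the $\O(nd^{-1})$ claim for $p>2$.

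For the sharp asymptotics I would take $u_1,\dots,u_n$ independent and uniform on $S^{d-1}$ and show that with positive probability $M^p(\om_n) \le (1+\delta)n\mu_{d,p}$. For each fixed $v$, $U^p(\om_n,v)$ is a sum of $n$ i.i.d.\ variables in $[0,1]$ with mean $\mu_{d,p}$, so Hoeffding's inequality yields $\PP\big(U^p(\om_n,v) > (1+\tfrac{\delta}{2})n\mu_{d,p}\big) \le \exp(-c\delta^2 n\mu_{d,p}^2) = \exp(-c'\delta^2 nd^{-p})$. I would then fix an $\eps$-net $\mathcal N$ of $S^{d-1}$ with $|\mathcal N| \le (3/\eps)^d$ and control the fluctuation off the net: for $p\ge 1$ the map $v\mapsto U^p(\om_n,v)$ is $np$-Lipschitz, while for $0<p<1$ it is Hölder of exponent $p$ with constant $n$, so taking $\eps$ just below $d^{-p/2}$ (resp.\ $d^{-1/2}$) keeps the net error $o(nd^{-p/2})$. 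A union bound over $\mathcal N$ then closes provided $\log|\mathcal N| = \Theta(d\log d) \ll \delta^2 nd^{-p}$, i.e.\ $n = \omega(d^{1+p}\log d)$, exactly the stated range; letting $\delta\to 0$ by a diagonal argument upgrades $1+\delta$ to $1+o(1)$ and, with the lower bound, gives $M_n^p = n\mu_{d,p} + o(nd^{-p/2})$.

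The hard part will be this last step, namely reconciling the two competing demands on the net fineness $\eps$: it must be small enough that the large Lipschitz/Hölder constant of the potential does not corrupt the passage from net points to the whole sphere, yet not so small that the net size $(3/\eps)^d$ overwhelms the Hoeffding tail $\exp(-c'\delta^2 nd^{-p})$ in the union bound. The window where both hold is precisely $n = \omega(d^{1+p}\log d)$. I expect that replacing Hoeffding by a Bernstein estimate (exploiting the true variance $\Theta(d^{-p})$ instead of the crude range bound $1$) would widen this window, but Hoeffding already delivers the claimed threshold.
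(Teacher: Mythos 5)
Your proposal is correct and follows essentially the same route as the paper: the averaging lower bound $n\mu_{d,p}$, repeated orthonormal bases for the order-of-magnitude upper bounds ($d^{1-p/2}$ versus $1$ for the maximum of $\sum_j |v_j|^p$), and a random configuration controlled by Hoeffding's inequality together with a $\delta$-net and union bound for the sharp asymptotics when $n = \omega(d^{1+p}\log d)$. You are in fact slightly more careful than the paper at one point: for $0<p<1$ the potential $v\mapsto U^p(\omega_n,v)$ is only $p$-H\"older with constant $n$ (the paper's claimed $np$-Lipschitz bound fails near zeros of $\langle v,u_i\rangle$), and your adjusted net fineness $\varepsilon \approx d^{-1/2}$ repairs this without changing the stated threshold.
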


For special values of $p$ and $d$, stronger results may be proved. In order to discuss the case $p=2$, we introduce the following notion: $\om_n = \{u_1, \ldots, u_n \} \subset S^{d-1}$ is an {\em isotropic set of unit vectors} if
\begin{equation*}
 \sum_{i=1}^n u_i \otimes u_i = \frac n d I_d,
\end{equation*}
where $I_d$ is the identity operator on $\R^d$. Isotropic sets of unit vectors are also called {\em unit norm tight frames}, see e.g. \cite{BF03}.

\begin{theorem} \label{thm2}
For every $d \geq 1$ and $n \geq d$,
\[
M_n^2(S^{d-1}) = \frac n d,
\]
and the extremal $\om_n$ configurations are exactly the isotropic sets of unit vectors.
\end{theorem}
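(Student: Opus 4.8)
The plan is to recognize the $\ell_2$-potential as a quadratic form and thereby reduce the polarization of $\om_n$ to a single spectral quantity. First I would introduce the \emph{frame operator}
\[
A = \sum_{i=1}^n u_i \otimes u_i,
\]
a symmetric, positive semidefinite operator on $\R^d$, and observe that for every $v \in S^{d-1}$,
\[
U^2(\om_n, v) = \sum_{i=1}^n \la v, u_i \ra^2 = \la A v, v \ra .
\]
Maximizing over $v$ then gives $M^2(\om_n) = \max_{v \in S^{d-1}} \la A v, v \ra = \lambda_{\max}(A)$, the largest eigenvalue of $A$. This converts the min–max polarization problem into a question about the eigenvalues of $A$.

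Next I would pin down the lower bound by a trace computation. Since each $u_i$ is a unit vector, $\tr(u_i \otimes u_i) = \|u_i\|^2 = 1$, so $\tr(A) = n$. As $A$ has $d$ nonnegative eigenvalues summing to $n$, its largest eigenvalue obeys
\[
\lambda_{\max}(A) \geq \frac{\tr(A)}{d} = \frac nd,
\]
with equality if and only if all $d$ eigenvalues coincide, i.e. $A = \tfrac nd I_d$. This is exactly the isotropy condition, so every configuration satisfies $M^2(\om_n) \geq n/d$, and equality holds precisely for isotropic sets of unit vectors. This already yields $M^2_n(S^{d-1}) \geq n/d$ together with the characterization of the equality case.

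The remaining, and genuinely nontrivial, point is that the bound $n/d$ is actually attained, i.e. that isotropic sets of unit vectors (equivalently, unit norm tight frames) exist in $\R^d$ for every $n \geq d$; without this the value could in principle be strictly larger. The hard part is therefore this existence statement, which I would invoke from the theory of finite unit norm tight frames, where such frames are known to exist for all $n \geq d$ (see \cite{BF03}). For completeness one may indicate a direct construction: an orthonormal basis handles $n = d$, a union of $k$ orthonormal bases handles $n = kd$, and the intermediate values can be realized either through a harmonic-frame construction or through a continuity argument interpolating between these extreme cases. Combining the lower bound of the previous paragraph with this existence result gives $M^2_n(S^{d-1}) = n/d$ and the exact description of the extremizers as isotropic sets of unit vectors.
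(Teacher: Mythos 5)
Your proof takes essentially the same route as the paper: introduce the frame operator $A = \sum u_i \otimes u_i$, identify $M^2(\om_n)$ with $\lambda_{\max}(A)$, and use $\tr A = n$ to get $\lambda_{\max}(A) \geq n/d$ with equality exactly when $A = \tfrac nd I_d$. You are right that the existence of isotropic sets for all $n \geq d$ is needed to conclude attainment; the paper handles this exactly as you do, by citing the Benedetto--Fickus characterization of unit norm tight frames in the discussion following the proof.
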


For $p=1$, Theorem~\ref{thm1} provides the exact asymptotics: $M^1_n(S^{d-1}) = \Theta( n d^{- 1/2})$. By the following fact, this also provides an estimate to a quantity involving sign sequences:

\begin{prop} \label{prop3} For any set of unit vectors $\om_n = \{u_1, \ldots, u_n\} \subset S^{d-1}$,
\[
\max_{\varepsilon \in \{-1,1\}^n}\abs{\sum_{i=1}^n\varepsilon_iu_i} = M^1(\om_n).
\]
\end{prop}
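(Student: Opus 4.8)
The plan is to prove the two inequalities
\[
\max_{\varepsilon \in \{-1,1\}^n}\abs{\sum_{i=1}^n\varepsilon_i u_i} \leq M^1(\om_n)
\quad\text{and}\quad
\max_{\varepsilon \in \{-1,1\}^n}\abs{\sum_{i=1}^n\varepsilon_i u_i} \geq M^1(\om_n)
\]
separately, each by invoking the elementary duality identity $\abs{w} = \max_{v \in S^{d-1}} \la v, w\ra$, which holds for every $w \in \R^d$ (trivially for $w=0$, and by Cauchy--Schwarz with equality for $w \neq 0$).

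First I would establish the upper bound. Fix an arbitrary sign sequence $\varepsilon \in \{-1,1\}^n$ and set $w = \sum_{i=1}^n \varepsilon_i u_i$. If $w = 0$ the bound is immediate; otherwise take $v = w/\abs{w} \in S^{d-1}$, so that
\[
\abs{w} = \la v, w\ra = \sum_{i=1}^n \varepsilon_i \la v, u_i\ra \leq \sum_{i=1}^n \abs{\la v, u_i\ra} = U^1(\om_n, v) \leq M^1(\om_n).
\]
Taking the maximum over $\varepsilon$ yields the first inequality.

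For the reverse bound I would start from a maximizer of the potential. By compactness of $S^{d-1}$, choose $v^* \in S^{d-1}$ with $M^1(\om_n) = \sum_{i=1}^n \abs{\la v^*, u_i\ra}$, and select the signs $\varepsilon_i = \sgn \la v^*, u_i\ra$, breaking ties arbitrarily whenever $\la v^*, u_i\ra = 0$. Writing $w = \sum_{i=1}^n \varepsilon_i u_i$ and applying Cauchy--Schwarz gives
\[
\abs{w} \geq \la v^*, w\ra = \sum_{i=1}^n \varepsilon_i \la v^*, u_i\ra = \sum_{i=1}^n \abs{\la v^*, u_i\ra} = M^1(\om_n),
\]
so the maximum over $\varepsilon$ is at least $M^1(\om_n)$. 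Combining the two inequalities gives the asserted equality.

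There is no serious obstacle: the statement is essentially a restatement of the fact that the optimal witness $v$ for the polarization problem is exactly the normalization of the signed sum $w$ that is optimal for the vector-sum problem, which is what forces the two maxima to coincide. The only points requiring mild care are the degenerate cases, namely $w = 0$ in the first direction and $\la v^*, u_i\ra = 0$ in the second; both are dispatched by the trivial bound and the arbitrary tie-breaking noted above.
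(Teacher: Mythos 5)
Your proof is correct. The first inequality is essentially identical to the paper's argument: both normalize the signed sum $w=\sum\eps_i u_i$ and observe $\abs{w}^2=\sum\eps_i\la w,u_i\ra\leq\sum\abs{\la w,u_i\ra}$. The interesting difference is in the reverse inequality, where your route is genuinely simpler than the paper's. The paper proceeds variationally: it applies Lagrange multipliers to show that at any differentiable critical point of $f(v)=\sum\abs{\la v,u_i\ra}$ the maximizer must equal the normalized signed sum $\sum\eps_i u_i/\abs{\sum\eps_i u_i}$ with $\eps_i=\sgn\la v,u_i\ra$, and then it needs a separate perturbation argument to rule out maximizers at points where $f$ is non-differentiable (i.e.\ where some $\la v,u_j\ra=0$). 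You bypass all of this: taking any maximizer $v^*$, choosing $\eps_i=\sgn\la v^*,u_i\ra$ with arbitrary tie-breaking, and invoking $\abs{w}\geq\la v^*,w\ra=\sum\abs{\la v^*,u_i\ra}=M^1(\om_n)$ requires no smoothness whatsoever --- the zero inner products contribute zero on both sides regardless of the chosen sign. What the paper's longer argument buys is a sharper structural statement (the maximizer of the potential \emph{is} the normalized optimal signed sum, and maximizers avoid the non-smooth locus), which is of independent interest; what your argument buys is a two-line proof of the stated identity. Both are complete proofs of the proposition.
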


\noindent
As a consequence of Proposition~\ref{prop3} and Theorem~\ref{thm1}, we immediately obtain

\begin{theorem}\label{thm4}
For every $d \geq 1$ and $n \geq d$,
\begin{equation}\label{minmaxsign}
\min_{(u_i)_1^n \subset S^{d-1}} \max_{\eps \in \{ \pm 1 \}^n} \abs{\sum_{i=1}^n \eps_i u_i} = \Theta \left( \frac n {\sqrt{d}} \right).
\end{equation}
\end{theorem}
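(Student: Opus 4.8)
The plan is to recognize the left-hand side of \eqref{minmaxsign} as precisely the $n$th $\ell_1$-polarization constant $M_n^1(S^{d-1})$ and then to read off its order of magnitude from Theorem~\ref{thm1}. Proposition~\ref{prop3} does all of the conversion work: for \emph{each fixed} configuration $\om_n = (u_1,\ldots,u_n) \subset S^{d-1}$ it supplies the exact identity
\[
\max_{\eps \in \{\pm1\}^n} \abs{\sum_{i=1}^n \eps_i u_i} = M^1(\om_n),
\]
where $M^1(\om_n) = \max_{v \in S^{d-1}} \sum_{i=1}^n \abs{\la v, u_i \ra}$ is the $\ell_1$-polarization of $\om_n$. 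Because this holds pointwise in $\om_n$, I can substitute it inside the outer minimization without any concern about interchanging the order of the quantifiers.

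Taking the minimum over all configurations of unit vectors then yields
\[
\min_{(u_i)_1^n \subset S^{d-1}} \max_{\eps \in \{\pm1\}^n} \abs{\sum_{i=1}^n \eps_i u_i} = \min_{\om_n \subset S^{d-1}} M^1(\om_n) = M_n^1(S^{d-1}),
\]
the last equality being the definition of the polarization constant. Finally I would invoke the $0 < p \leq 2$ case of Theorem~\ref{thm1} at $p = 1$, which asserts $M_n^1(S^{d-1}) = \Theta(n\,d^{-1/2})$, that is, exactly the claimed order $\Theta(n/\sqrt{d})$.

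In short, there is no genuine obstacle in this argument itself: all of the analytic content sits in Proposition~\ref{prop3} (the sign-sequence reformulation) and in the $p = 1$ instance of Theorem~\ref{thm1} (the matching upper and lower bounds of order $n\,d^{-1/2}$). The single point that warrants care is the quantifier structure, a min of a max; but since Proposition~\ref{prop3} furnishes an \emph{equality} valid for every individual configuration rather than merely an inequality, the inner maximum may be replaced by $M^1(\om_n)$ term by term, and the reduction to $M_n^1(S^{d-1})$ is exact.
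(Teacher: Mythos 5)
Your argument is exactly the paper's: the authors also derive Theorem~\ref{thm4} immediately by combining the pointwise identity of Proposition~\ref{prop3} with the $p=1$ instance of the $0<p\leq 2$ case of Theorem~\ref{thm1}. Your added remark about the quantifier structure is correct and harmless; the reduction is exact precisely because Proposition~\ref{prop3} is an equality for every fixed configuration.
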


Finally, we discuss the $\ell_p$-polarization constants of the unit circle.

\begin{prop} \label{prop5}
For $d=2$ and $0<p \leq 1$ as well as for $p = 2, 4, \ldots, 2n-2$, $M^p(\om_n)$ is minimized by vector sets which are equally distributed on the half-circle. For $p = 2, 4, \ldots, 2n-2$, the potential function of any extremal configuration is constant on $T$, whereas for $0<p \leq 1$,
\begin{equation*}
M^p_n(S^1) = \sum_{k =1}^n \abs{\cos \left(  \frac {k \pi}  n - \frac \pi {2n }\right)}^p\,
\end{equation*}
for even values of $n$, and
\begin{equation*}
M^p_n(S^1) = \sum_{k =1}^n \abs{\cos \left(  \frac {k \pi}  n \right)}^p\,
\end{equation*}
for odd $n$.
\end{prop}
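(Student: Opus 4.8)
The plan is to parametrize the vectors by angles on the half-circle and to treat the potential as a sum of translates of the single $\pi$-periodic kernel $g(\theta)=|\cos\theta|^p$. Writing $u_i=(\cos\alpha_i,\sin\alpha_i)$ and $v=(\cos\theta,\sin\theta)$, the antipodal symmetry $|\cos(\theta+\pi)|=|\cos\theta|$ lets me take $\alpha_i\in[0,\pi)$, so that
\[
U^p(\om_n,v)=\sum_{i=1}^n g(\theta-\alpha_i),\qquad g(\theta)=|\cos\theta|^p=c_0+2\sum_{k\geq1}c_k\cos(2k\theta).
\]
First I would record that the Fourier coefficients $c_k=\tfrac1\pi\int_{-\pi/2}^{\pi/2}\cos^p\theta\,\cos(2k\theta)\,d\theta$ are classical Beta-function values, with $c_0=\mu_{2,p}$; that for $0<p<2$ their sign pattern is $\sgn(c_k)=(-1)^{k+1}$ for $k\geq1$ (the sign being carried by the factor $1/\Gamma(\tfrac p2-k+1)$); and that for $p=2m$ an even integer one has $c_k=0$ for $k>m$ while $c_k>0$ for $0\leq k\leq m$.

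For the even-integer case $p=2m$ with $m\leq n-1$ the argument is short and complete. Since $g$ is then a trigonometric polynomial of degree $2m$, the mean of $U^p(\om_n,\cdot)$ over the circle equals $n c_0=n\mu_{2,p}$ for \emph{every} configuration, so $M^p(\om_n)\geq n\mu_{2,p}$ with equality exactly when $U^p(\om_n,\cdot)$ is constant. Expanding the harmonics, $U^p(\om_n,\cdot)$ is constant if and only if the power sums $P_k=\sum_i e^{2ki\alpha_i}$ vanish for $k=1,\dots,m$. The equally spaced set $\alpha_j=j\pi/n$ makes $P_k=\sum_j e^{2\pi ikj/n}=0$ for all $1\leq k\leq n-1$, hence for all $k\leq m$, so it is a minimizer with constant potential; conversely any minimizer attains the mean as its maximum, which for a continuous function forces it to be constant. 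This settles both assertions for $p=2,4,\dots,2n-2$.

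For $0<p\leq1$ I would first evaluate the equally spaced configuration. Summing the Fourier series over $\alpha_j=j\pi/n$ annihilates every harmonic whose index is not a multiple of $n$, leaving
\[
U_{\mathrm{eq}}(\theta)=n\bigl(c_0+2\sum_{l\geq1}c_{ln}\cos(2ln\theta)\bigr),
\]
a function of period $\pi/n$. Using $\sgn(c_{ln})=(-1)^{ln+1}$ together with the rapid decay of $|c_k|$, I would show that $U_{\mathrm{eq}}$ is monotone on $(0,\pi/(2n))$, so that its maximum is attained at $\theta=0$ when $n$ is odd and at $\theta=\pi/(2n)$ when $n$ is even; evaluating $U_{\mathrm{eq}}$ there yields exactly the two closed-form expressions in the statement.

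The main obstacle is optimality for $0<p\leq1$, i.e.\ that no configuration beats the equally spaced one. A linear averaging bound cannot suffice here, since it can only reproduce the mean $n\mu_{2,p}$, which is \emph{strictly} below the true minimum for non-even $p$. Instead I would argue variationally: a minimizer exists by compactness, and at a minimizer the set where $U^p(\om_n,\cdot)$ attains its maximum must be rich enough that no first-order perturbation of the angles lowers the maximum, giving an equioscillation/equilibrium condition. The crucial structural input is that for $0<p\leq1$ the kernel $\cos^p\theta$ is concave on $(-\pi/2,\pi/2)$ (with downward cusps at the zeros of $\cos$), since $(\cos^p\theta)''=p\cos^{p-2}\theta\,[(p-1)\sin^2\theta-\cos^2\theta]\leq0$ there; this concavity is what should force the equilibrium configuration to be the equally spaced one and rule out competing minimizers. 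Proving that the equilibrium condition admits only the equally spaced solution — a uniqueness statement for this sum-of-translates minimax problem — is the delicate step, and I would expect to invoke or adapt a rearrangement/interlacing argument for concave periodic kernels to close it.
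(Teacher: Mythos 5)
Your treatment of the even-integer exponents $p=2,4,\dots,2n-2$ is correct and complete: averaging the potential over the whole circle (rather than over the equally spaced configuration, as the paper does) gives the same lower bound $n\mu_{2,p}$, and the vanishing of the power sums $\sum_j e^{2ikj\pi/n}$ for $1\leq k\leq m\leq n-1$ shows the bound is attained with constant potential. That part is a legitimate, mildly different route to the same conclusion.

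The genuine gap is in the case $0<p\leq 1$, and you identify it yourself without closing it: you never prove that the equally spaced configuration is optimal, only that it is a plausible equilibrium. Your proposed route --- a variational equioscillation condition plus concavity of $\cos^p\theta$ on $(-\pi/2,\pi/2)$, to be finished by ``a rearrangement/interlacing argument for concave periodic kernels'' --- is precisely the hard content, and asserting that you ``would expect to invoke or adapt'' such an argument is not a proof; uniqueness of equilibria for sum-of-translates minimax problems on the circle is a delicate matter (this is the subject of an entire line of work, e.g.\ Farkas--Nagy--R\'ev\'esz). The paper closes this step with a specific device you are missing: the angle-doubling substitution $\u_i=u_i^2$, $\vv=-v^2$, which converts $2\abs{\la v,u_i\ra}$ into the chordal distance $\abs{\vv-\u_i}$, so that the potential becomes $-\sum g(\psi-\beta_i)+n$ with $g(t)=1-\abs{\sin(t/2)}^p$ convex on $[0,2\pi]$ for $p\leq 1$. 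This puts the problem exactly into the hypotheses of Theorem~1 of Hardin--Kendall--Saff, which asserts both optimality and uniqueness of equidistributed sets for convex kernels. Without either citing such a result or actually executing the rearrangement argument, the main claim for $0<p\leq 1$ is unproven. Secondarily, your determination of \emph{where} the maximum of the equally spaced potential sits (at a base point for odd $n$, at a midpoint for even $n$) also rests on an unverified monotonicity claim for the tail of the Fourier series $\sum_{l\geq 1}c_{ln}\cos(2ln\theta)$; the sign pattern of the $c_{ln}$ alternates with $ln$, so ``rapid decay'' alone does not obviously yield monotonicity on $(0,\pi/(2n))$, and this too needs an argument (the paper defers it to Stolarsky).
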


\section{General asymptotics}

\begin{proof}[Proof of Theorem~\ref{thm1}]
We start with the lower bound, which holds for every $p>0$ and $n \geq d$. Let $\omega_n = \{ u_1, \ldots, u_n \} \subset S^{d-1}$ be fixed.
Note that
\begin{align*}
M^p(\om_n) = \max_{v \in S^{d-1}}\sum_{i=1}^n\abs{\inner{v, u_i}}^p
& \geq \E_v\left[\sum_{i=1}^n\abs{\inner{v, u_i}}^p\right] \\
&= \sum_{i=1}^n\E_v\abs{\inner{v, u_i}}^p \\
&= n \E_v\abs{\inner{v, u_1}}^p,
\end{align*}
where the expectation is taken as $v$ being selected uniformly at random from the sphere. By a standard calculation, we obtain that
\begin{align*}
\E_v\abs{\inner{v, u_1}}^p = \frac{2}{\Beta\left(\frac{1}{2},\frac{d-1}{2}\right)}\int_0^1 t^p \left(1 - t^2\right)^{(d - 3)/2} \dif t = \mu_{d,p} 
\end{align*}
and by the previous arguments,
\begin{equation}\label{lowerbound}
M_n^p(S^{d-1}) \geq n \mu_{d,p}.
\end{equation}

Next, we show that this bound is asymptotically correct when $n$ is large, or when $0<p\leq 2$. First, assume that $n = \Omega(d^{1 + p} \log d)$, and select an independent, random uniform sample
$\om_n = \{u_1, \ldots, u_n\}$  from $S^{d-1}$. We will show that with positive probability, $M^p (\om_n)$ is of order $O(n \, d^{- p/2})$.

For conciseness, let
\begin{equation}\label{def_ffunct}
f(v) = \sum_{i=1}^n \abs{\inner{v,u_i}}^p.
\end{equation}

\noindent
Observe that $f$ is $n p$-Lipschitz, since
\begin{align*}
\abs{f(v) - f(w)} &= \sum_{i=1}^n\left(\,\abs{\inner{v,u_i}}^p - \abs{\inner{w,u_i}}^p\right)\\
&\leq \sum_{i=1}^n\abs{\,\abs{\inner{v,u_i}}^p - \abs{\inner{w,u_i}}^p}\\
&\leq p \sum_{i=1}^n\abs{\,\abs{\inner{v,u_i}} - \abs{\inner{w,u_i}}} \leq p\sum_{i=1}^n\abs{\inner{v-w,u_i}}\\
&\leq p \sum_{i=1}^n\abs{v-w} = np\abs{v-w}.
\end{align*}

On the other hand, for any fixed $v \in S^{d-1}$,
\begin{equation}\label{fexp}
\E f(v) = n \mu_{d,p},
\end{equation}
where the expectation refers to the choice of the random base system $\omega_n$. Moreover, since
 $ 0 \leq \abs{\inner{v,u_i}}^p \leq 1$, Hoeffding's inequality and \eqref{fexp} yields that for any fixed $v \in S^{d-1}$ and $t>0$,
\begin{equation}\label{hoeffding}
\PP\left(\,\abs{f(v) - n \mu_{d,p}} > t\right) < 2e^{ - \frac{2t^2}{ n}}.
\end{equation}
We are going to bound the maximum of $f(v)$ on $S^{d-1}$ by pinning it down at the points of a $\delta$-net and then exploiting the Lipschitz property. It is well known (see e.g. \cite{B97}) that there exists a $\delta$-net (with respect to the Euclidean metric) in $S^{d-1}$ with at most $(4 / \delta)^d$ points. Let $D$ be such a $\delta$-net. Choose $v^* \in S^{d-1}$ be such that
\[
 f(v^*) = M^p(\om_n) = \max_{v \in S^{d-1}} f(v).
\]
Since $v^*$ must be within $\delta$ of some $w \in D$ and $f$ is $np$-Lipschitz, we have that $ | f(w) -  M^p(\om_n) | \leq  \delta n p$. Then, the union bound and \eqref{hoeffding} gives that for every $\lambda >0$,
\begin{align*}
    \PP\left(\,\abs{ M^p(\om_n) - n \mu_{p,d}} > \lambda\right) &\leq \sum_{w \in D}\PP\left(\,{\abs{f(w) - n \mu_{p,d}} > \lambda - \delta n  p}\right)\\
    &\leq 2 \left(\frac{4}{\delta}\right)^{d}e^{- \frac{ 2 (\lambda - \delta n  p)^2}{n}}.
\end{align*}
Setting $\delta = \frac{\lambda}{2 n p}$, the bound simplifies to
\[
\PP\left(\,\abs{ M^p(\om_n) - n \mu_{p,d}} > \lambda\right) \leq 2 \left( \frac{8 n p} {\lambda} \right)^d e^{- \frac{\lambda^2}{2n}}
\]
Take  $\lambda = c n d^{-p/2}$ with some constant $c>0$. Then
\begin{align*}
\PP\left(\,\abs{ M^p(\om_n) - n \mu_{p,d}} > c n d^{- p /2}\right) \leq  {c'}^{\,d} d^{\frac{dp}{2}}e^{- \frac {c^2}{2} n d^{-p}}
\end{align*}
with $c' = 10/c$. Taking logarithm shows that the above probability is guaranteed to be less than one if
\[
n > \frac {2 \log c'}{c^2} \, d^{1 + p} + \frac {p} {c^2} \, d^{1+p} \log d.
\]
Therefore, when $n = \om(d^{1+p} \log d)$, we obtain that
\[M^p_n (S^{d-1})= n \mu_{d,p} + o(nd^{-p/2})= \Theta(nd^{-p/2}).\]

Let us turn to the estimates valid for smaller values of $n$. The lower bound~\eqref{lowerbound} still holds, so we only have to prove the upper estimates.
Without changing the asymptotic bounds, we may assume that $n = k d$. Let $\omega_n$ consist of $k$ copies of an orthonormal basis of $\R^d$. Then,
\[
\max_{v \in S^{d-1}} U^p (\om_n , v ) = k \, \max_{|v|=1} \sum_{i=1}^d |v_i|^p =
\begin{cases}
 k d^{1 - p/2}  & \text{for } 0< p \leq 2 \\
k & \text{for } 2 <p,
\end{cases}
\]
which implies the upper bounds for arbitrary $n \geq d$.
\end{proof}

\medskip

\noindent
{\bf Remark.} The following construction gives a slightly stronger estimate for a small number of points, when $0<p\leq 2$.
Let $c_{n,d,p}$ be the infimum of all constants $c \in \R$ satisfying
\[
M^p_n(S^{d-1}) \leq c\, nd^{-p/2}.
\]
Let $H$ and $H^\perp$ be two orthogonal, $d$-dimensional linear subspaces in $\R^{2d}$. Take $\om_n$ and $\om_n^\perp$ to be $n$-element vector sets in $H$ and $H^\perp$, respectively, with $M^p_n(S^{d-1}) =M^p(\om_n) =M^p(\om_n^\perp)$. Let $\om_{2n} = \om_n \cup \om_n^\perp \subset \R^{2d}$. Then

\begin{align*}
    M^p(\om_{2n}) &= \max_{\substack{v \in H,\, v^\perp \in H^\perp\\\abs{v}^2 + |{v^\perp}|^2 = 1}} \left( U^p(\om_n, v)  +  U^p(\om_n^\perp, v^\perp)\right)\\
    &\leq \max_{\abs{v}^2 + |{v^\perp}|^2 = 1} \left(\,\abs{v}^p + |v^\perp|^p\,\right) c_{n,d,p} \, nd^{-p/2} \\
    &= \frac{2}{2^{p/2}} \, c_{n,d,p} \, nd^{-p/2} = c_{n,d,p} \, (2n)(2d)^{-p/2}.
\end{align*}
Thus, $c_{2n,2d,p} \leq c_{n,d,p}$. Using the fact that for $d=0$, $M^p_n(S^0) = n = nd^{-p/2}$, it follows that for $a,b \in \mathbb{N}, a \geq b$, we have $c_{2^a,2^b,p} \leq 1$. Moreover, for $2^a < n < 2^{a+1}$, it is easy to see that $M^p_n(S^{d-1}) \leq 2M^p_{2^a}(S^{d-1})$ (by taking the vectors of $\om_n$ once or twice).  Likewise, for $2^b < d < 2^{b+1}$, we know that $M^p_n(S^{d-1}) \leq 2^{p/2}M^p_{n}(S^{2^b-1})$ by keeping the optimal vectors from the $d=2^b$ case. Therefore, $c_{n,d,p} \leq 2^{p/2}$ for all $n \geq d$.

\section{Isotropic vector sets: $p=2$}

\begin{proof}[Proof of Theorem~\ref{thm2}.] Let $\om_n = \{ u_1, \ldots, u_n \} \subset S^{d-1}$. Introduce the {\em frame operator}
\[
A= \sum_{i=1}^n u_i \otimes u_i,
\]
where $u \otimes v = u v^\top$ denotes the tensor product of the two vectors. Then for any vector $v \in S^{d-1}$,
\[
v^\top A v =  \sum_{i=1}^n \la v, u_i \ra^2.
\]
Therefore, $\max_{v \in S^{d-1}} \sum \la v, u_i \ra^2$ is attained at the eigenvector of norm 1 of $A$ belonging to the maximal eigenvalue. Since $\tr A = n$, we obtain that
\[
M^2(\om_n) \geq \frac n d,
\]
and equality holds if and only if $\sum_{i=1}^n u_i \otimes u_i = \frac n d I_d$, that is, if $\om_n$ is an isotropic vector system.
\end{proof}

Isotropic vector sets also arise in different contexts: in frame theory, they are called {\em unit norm tight frames} or UNTF's, while in the context of John's theorem, their rescaled copies provide a {\em decomposition of the identity}. A characterization of them was first given by Benedetto and Fickus \cite{BF03} (for a simplified proof, see \cite{A14}): they showed that a set of $n$ unit vectors form an isotropic set if and only if they are the minimizer of the {\em frame potential} among $n$-element vector sets in $S^{d-1}$, defined by
\[
\textrm{FP}(\om_n) = \sum_{i,j} \abs{\inner{u_i,u_j}}^2.
\]
In particular, it follows that $n$-element isotropic sets of $d$-dimensional unit vectors exist for every $n \geq d$.

For $d=2$ and $d=3$, the characterization may be simplified by utilizing the connection with complex numbers. Goyal et al. \cite{GKK01} showed that in $\R^2$, isotropic sets of unit vectors correspond to sequences $\{z_i\}_{i=1}^n \subset \C$ satisfying $\abs{z_i} = 1$ and
\[
\sum_{i=1}^n z_i^2 = 0,
\]
where the unit circle $S^1$ of $\R^2$ is identified with the complex unit circle $T$. For $d=3$, Benedetto and Fickus [2] provide a correspondence between isotropic vector sets and
sequences $\{z_i\}_{i=1}^n \subset \C$ satisfying $\abs{z_i} \leq 1$ and
\[
\sum_{i=1}^n \abs{z_i}^2= \frac{2}{3}n, \qquad \sum_{i=1}^n z_i^2 = 0, \qquad \sum_{i=1}^n z_i\sqrt{1 - \abs{z_i}^2} = 0.
\]
Here, each point in $S^2$ is identified with its projection onto the unit disc of the complex plane.

\section{Sign sequences: $p=1$}
\begin{proof}[Proof of Proposition~\ref{prop3}]

First, we show that $\max_{\varepsilon \in \{-1,1\}^n}\abs{\sum_{i=1}^n\varepsilon_iu_i} \leq M^1(\om_n).$ Indeed, let $\eps$ be an arbitrary sign sequence, and define
\[
z = \sum_{i=1}^n \eps_i u_i.
\]
Then
\begin{align*}
|z|^2 = \left| \sum \eps_i u_i  \right|^2 & = \sum_{i, j} \eps_i \eps_j \la u_i, u_j \ra \\
&= \sum\eps_i \la z, u_i \ra \leq \sum ^ n|\la z, u_i \ra|
\end{align*}
which shows that
\[
\left|\sum \eps_i u_i\right| \leq U^1 \left(\om_n, \frac {z}{|z|} \right) \leq M^1(\om_n).
\]

For the reverse direction, introduce the function $f(v) = \sum_{i=1}^n \abs{\inner{v,u_i}}$ defined on $S^{d-1}$ as in \eqref{def_ffunct}. Applying Lagrange multipliers implies that those critical points of $f$ on $S^{d-1}$ where $f$ is differentiable satisfy
\begin{equation*}
v =  \frac {\sum  \eps_i u_i}{|\sum \eps_i u_i|}
\end{equation*}
with $\eps_i = \sgn{\inner{v, u_i}}$. By taking inner products of both sides with $v$ we obtain that $|\sum \eps_i u_i| = U^1(\om_n, v)$.

Therefore, we only have to rule out the existence of maximizers of $f$ at non-differentiable points.
Assume on the contrary that $v \in S^{d-1}$ is a maximizer with $\la v, u_j \ra = 0$, where $1 \leq j \leq k$, and $\abs{\inner{v, u_j}}>0$ for $k < j \leq n$. Then for $\delta \in \R$ with sufficiently small absolute value,
 \begin{align*}
f(v + \delta u_1) &= \sum_{i=1}^n\abs{\inner{v + \delta u_1,u_i}} \\
&= |\delta| \sum_{i=1}^k\abs{\inner{u_1,u_k}} + \sum_{i=k+1}^n \abs{ \la v, u_i \ra + \delta \la u_1, u_i \ra}\\
&= f(v) + \delta   \sum_{i=k+1}^n \sgn{\inner{v,u_i}} \cdot \inner{u_1,u_i}  + |\delta| \sum_{i=1}^k\abs{\inner{u_1,u_k}}
.
\end{align*}
Here, $\sum_{i=1}^k\abs{\inner{u_1,u_k}} \geq 1$, and thus, for sufficiently small but non-zero $\delta$ whose sign agrees with that of $\sum_{i=k+1}^n \sgn{\inner{v,u_i}} \cdot \inner{u_1,u_i} $, we would obtain that
\[
f\left(\frac{v + \delta u_j}{|v + \delta u_j|}\right) \geq \frac{f(v) + |\delta|}{\sqrt{1 + \delta^2}}>f(v),
\]
which contradicts the maximality of $v$.
\end{proof}

Sign sequences arise in several topics, most prominently in the context of discrepancy theory (see, for example, the famous conjecture of Koml\'os~\cite{S87}). Note, however, a fundamental difference: in that setting, one would like to {\em minimize} the norm of $\sum \eps_i u_i$, whereas here, the goal is to find the {\em maximizers}. The ``dual'' question of Theorem~\ref{thm4} was asked by Dvoretzky~\cite{D63} in 1963: Determine
\[
\max_{(u_i)_1^n \in S^{d-1}} \min_{\eps \in \{ \pm 1 \}^n} \abs{\sum_{i=1}^n \eps_i u_i} .
\]
Various related games had been studied by Spencer~\cite{S77}. B\'ar\'any and Grinberg~\cite{BG81} proved a stronger result which implies an $O(d)$ upper bound on the above quantity.

More related to the present question is Bang's lemma~\cite{B51}, which arose in the context of the well-known plank problem. Its simplest form \cite{B01} states the following: {\em If $u_1, \ldots, u_n$ are unit vectors in $\R^d$, and the signs $\eps_i = \pm 1$ are chosen so as to maximize the norm $| \sum_1^n \eps_i u_i|$, then $|\la u_k, \sum_1^n \eps_i u_i \ra| \geq 1$ holds for every $k$.} Note, however, that this only implies
\[
\min \max \left|\sum_1^n \eps_i u_i\right| \geq \sqrt{n}.
\]
 The same estimate follows by taking the average of $|\sum \eps_i u_i|^2$ over all possible sign sequences.

It remains an open question to determine the extremal point configurations of \eqref{minmaxsign}. In general, we have very little information about the extremizers, and a complete description of them can only be hoped for in a few special cases. For $n=d$, the above averaging argument yields that the extremum is uniquely achieved by the vectors $e_i$ of an orthonormal basis, which satisfy $\min \max |\sum_1^n \eps_i e_i|  = \sqrt{d}$.  For $n = d+1$, natural intuition and numerical experiments suggest that each extremal configuration is, up to sign changes, the union of the vertex set of an even dimensional regular simplex and an orthonormal basis of the orthogonal complement of its subspace. The following conjecture was stated in a slightly incorrect form in \cite{BFMK18} and has been corrected by \cite{P19}.

\begin{conj}\label{simplexconj}
For any $d \geq 1$, and for any configuration of $d+1$ unit vectors $u_i, \ldots, u_{d+1} \in S^{d-1}$, there exists a sequence of signs $\eps \in \{\pm 1\}^{d+1}$ so that
\[
\left| \sum_{i = 1} ^ {d+1} \eps_i u_i \right| \geq \sqrt{d+2}.
\]
Moreover, the above estimate is sharp if and only if, up to sign changes, $(u_i)_1^{d+1}$ is the union of the vertex set of a regular simplex centered at the origin in a subspace $H$, and an orthonormal basis of $H^\perp$, where $H$ is an even dimensional linear subspace of $\R^d$.
\end{conj}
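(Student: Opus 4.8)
The plan is to recast the statement as a question about the Gram matrix $G = (\la u_i, u_j \ra)_{i,j=1}^{d+1}$, which is positive semidefinite with unit diagonal and rank at most $d$. Since $|\sum_i \eps_i u_i|^2 = \eps^\top G \eps$, the inequality becomes $\max_{\eps \in \{\pm 1\}^{d+1}} \eps^\top G \eps \geq d+2$. The essential point is that $G$ is singular, so there is a nonzero $c$ with $Gc = 0$, equivalently a linear dependency $\sum_i c_i u_i = 0$; this extra relation (which is absent when $n \leq d$) is exactly what should buy the improvement from Bang's bound $d+1$ to the conjectured $d+2$.

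First I would record the discrete form of Bang's lemma. At a configuration $\eps$ that maximizes $\eps^\top G \eps$ under single sign flips, writing $a = G\eps$, flipping coordinate $k$ shows $\eps_k a_k \geq 1$, so $a_k = \eps_k t_k$ with $t_k = |a_k| \geq 1$ and $\eps^\top G \eps = \sum_k t_k \geq d+1$. Pairing the dependency with $z = \sum_i \eps_i u_i$ gives $0 = \la c, a \ra = \sum_i c_i \eps_i t_i$, i.e. $\sum_i b_i t_i = 0$ with $b_i = c_i \eps_i$. Thus the value equals $\sum_i t_i$ subject to one linear balance constraint and the box constraints $t_i \geq 1$.

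Next comes the sharpening, which I expect to be the crux. A crude second moment already beats $d+1$: since $\mathrm{Var}_\eps(\eps^\top G \eps) = 2(\|G\|_F^2 - (d+1)) \geq 2(d+1)/d$ by $\mathrm{rank}\, G \leq d$, the Bhatia--Davis inequality together with $\min \geq 0$ forces $\max \geq d+1 + 2/d$. The main obstacle is that neither this bound nor the single-constraint linear program above is strong enough to reach the dimension-free gap $+2$: one must exploit that $a = G\eps$ comes from a genuine unit-vector Gram matrix rather than a free vector. I would therefore attempt an induction on $d$ that peels off "free" directions. If some $u_k$ is orthogonal to the span of the others, it contributes exactly $1$ to $|z|^2$ regardless of its sign, so it can be split into an $H^\perp$ factor and the problem reduced to dimension $d-1$; after removing all such directions one is left with an irreducible configuration whose unique dependency has all $c_i \neq 0$.

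Finally, the equality analysis. Tracing the cases of equality back through the reduction, the irreducible core should be forced to have constant pairwise inner products, hence to be a regular simplex spanning a subspace $H$, with the peeled directions an orthonormal basis of $H^\perp$. The parity phenomenon that an \emph{odd} number $\dim H + 1$ of simplex vertices makes $\min_\eps |\sum_i \eps_i| = 1$ (so the simplex contributes exactly $\dim H + 2$, whereas an even number of vertices yields a strictly larger value) is precisely what should force $\dim H$ to be even. I expect the genuinely hard and currently open part to be closing the gap from $d+1+2/d$ to the sharp $d+2$ on the irreducible core and simultaneously showing that every configuration other than the regular simplex is strictly suboptimal; this is why the statement is recorded as a conjecture rather than proved here.
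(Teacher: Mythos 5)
The statement you are addressing is Conjecture~1 of the paper: it is stated as an open problem and the paper offers no proof of it, so there is nothing on the paper's side to compare against. Judged on its own terms, your proposal is an honest and partially correct programme rather than a proof, and you say so yourself. The pieces you do carry out check out: the Gram-matrix reformulation $\max_\eps \eps^\top G\eps \geq d+2$ with $G$ singular; the discrete Bang argument giving $(G\eps)_k = \eps_k t_k$, $t_k \geq 1$, at a flip-stable $\eps$; the orthogonality relation $\sum_i c_i \eps_i t_i = 0$ coming from the dependency; the variance bound $\mathrm{Var}_\eps(\eps^\top G\eps) = 2(\norm{G}_F^2 - (d+1)) \geq 2(d+1)/d$ via $\mathrm{rank}\,G \leq d$, which with Bhatia--Davis yields $\max \geq d+1+2/d$; and the parity computation showing a regular simplex on $k+1$ vertices contributes exactly $k+2$ when $k$ is even and strictly more when $k$ is odd. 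These are genuine partial results that go beyond what the paper records (the paper only notes the $\sqrt{n}$ bound from Bang's lemma and from averaging).

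The gap is exactly where you locate it, and it is not a technicality: nothing in the proposal closes the distance from $d+1+2/d$ to $d+2$, nor establishes the equality characterization. Two further points deserve flagging. First, your reduction peels off vectors that are \emph{orthogonal} to the span of the rest, but the "irreducible core" you are left with need not have a unique dependency with all $c_i \neq 0$: the kernel of $G$ can have dimension greater than one, and a vector can fail to appear in any dependency without being orthogonal to the others (e.g.\ $d=2$, $u_3=u_1$, $u_2$ at $45^\circ$ to $u_1$), so the dichotomy "orthogonal, hence peelable" versus "appears in the dependency" does not exhaust the cases. Second, the single linear constraint $\sum_i b_i t_i = 0$ with $t_i \geq 1$ cannot by itself force $\sum_i t_i \geq d+2$ (the constraint is satisfiable with all $t_i$ close to $1$ when the $b_i$ have mixed signs), so any successful argument must use more of the structure of $G$ than the one dependency --- as you correctly anticipate. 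The conjecture remains open after your proposal.
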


\section{Planar case: equidistributed sets} \label{sec_planar}

In the plane, finding $M^p(\om_n)$ is equivalent to  maximizing the sum of the $p$th powers of the Euclidean distances from a variable unit vector to $n$ fixed unit vectors via the following transformation.
Identify $S^1$ with the complex unit circle $T$, and let $u_i = e^{i \alpha_i}$, $v = e^{i \phi}$.
Introduce $\u_i = e^{i 2 \alpha_i} = u_i^2$ and $\vv = e^{i (2 \phi + \pi)} = - e^{i 2 \phi}$. Then
\begin{equation}\label{distcomplex}
|\vv - \u_i | = 2 \abs{\sin{ \frac {2 ( \phi + \pi/2) - 2\alpha_i }2 }  } = 2 | \cos( \alpha_i - \phi)| = 2 \abs{ \la v, u_i\ra}.
\end{equation}
Therefore, $M^p(\om_n)$ may be obtained by finding the point $\vv \in S^1$ for which $\sum \abs{\u_i - \vv}^p$ is maximal.

Accordingly, we introduce the following quantities for an $n$-point configuration $\omega_n = \{ z_1, \dots, z_n \} \subset T$:
\begin{align*}
    \widetilde{U}^p(\omega_n, z) &= \sum_{i=1}^n\abs{z - z_i}^p\\
    \M^p(\omega_n) &= \max_{z \in T} \widetilde{U}^p(\omega_n,z)\\
    \M^p_n &= \min_{\omega_n \in T^n} \M^p(\omega_n).
\end{align*}
Analogues of the above notions with negative $p$ are called the Riesz potential and polarization quantities, and have been extensively studied before, see e.g. \cite{ES13} for general results in that direction.

By \eqref{distcomplex}, $\M^p_n = 2^p M^p_n(S^1)$, and thus Theorem~\ref{thm1} implies the lower bound
\begin{equation}\label{MMtilde}
\M^p_n \geq 2^p \cdot n \mu_{2,p} = n \cdot \widetilde{\mu}_p,
\end{equation}
where
\[
\widetilde{\mu}_p = 2^p \mu_{2,p} =  \frac{2^p\Gamma\left(\frac{p+1}{2}\right)}{\sqrt{\pi}\,\Gamma\left(\frac{p}{2} + 1\right)}
= \frac{\Gamma\left(p + 1\right)}{\Gamma\left(\frac{p}{2} + 1\right)^2} = \binom{p}{p/2},
\]
using the Legendre duplication formula and the natural extension of the binomial coefficient to non-integers.

The above notions have been studied by Stolarsky \cite{S75a, S75b}, who determined $\M^p(\omega_n^*)$ for $0<p<2n$,  where $\om_n^*$ is an {\em equidistributed set} on $T$:
\begin{equation*}
\omega_n^* = \{ 1, \xi, \xi^2, \dots, \xi^{n-1} \},
\end{equation*}
where $\xi = e^{i2\pi/n}$. He also determined $\M^p_n$ for $n=3$ and $0 <p \leq 2$. Nikolov and Rafailov \cite{NR12} determined the value $\M^p_n$ for $n=3$ and arbitrary $p>0$ and also discussed the critical points of  $\widetilde{U}^p(\omega_n^*, z)$ on $T$. They showed that if $p$ is an even integer with $ 2 \leq p \leq 2n - 2$, then $\widetilde{U}^p(\omega_n^*, z)$ is constant on $T$. Moreover, they proved \cite{NR13} that this property (holding for all even integer exponents between $2$ and $2n$) characterizes equidistributed sets. They conjectured that the condition holding solely for $p = 2n - 2$ is already sufficient for characterization. This was verified by Bosuwan and Ruengrot \cite{BR17} (for the case $\om_n \subset T$, which we assumed anyway). The authors also proved that for $ p = 2, 4, \ldots, 2n-2$, $\M^p_n$ is attained at the configurations $\om_n$ which satisfy
\[
\sum_{z \in \om_n} z^j = 0
\]
for every $j = 1, 2, \ldots, n-1$.

On the other hand, Hardin, Kendall and Saff \cite{HKS13}, proving a conjecture of the first named author formulated in \cite{ABE12}, proved the polarization optimality of equidistributed sets on the unit circle for convex potentials. Recently, their result has been extended to more general settings~\cite{FNR18}.

\begin{proof}[Proof of Proposition~\ref{prop5}]
By \eqref{distcomplex}, finding the polarization constants is equivalent to maximizing the quantity $\sum \abs{\u_i - \vv}^p$.

First, we assume $0 < p \leq 1$. Let $g(t) = - \abs{\sin(t/2)}^p + 1.$ Then $g$ is non-negative, non-increasing and strictly convex on $[0, 2\pi]$. Moreover,
\[
\frac 1 2 \sum \abs{\vv - \u_i}^p = - \sum g(\psi - \beta_i) + n,
\]
where $\vv = e^{i \psi }$ and $\u_i =  e^{i \beta_i }$. Therefore, $\M^p(\om_n)$ is attained when $\sum g(\psi - \beta_i)$ is minimized. Theorem~1 of \cite{HKS13} implies that $\M^p_n$ is achieved by equidistributed points sets; moreover, these are the only optimizers. Accordingly, the lines spanned by an optimal configuration for $M^p_n(S^1)$ are evenly spaced. It is easy to check \cite{S75b} that for such a configuration, the maximum of the potential function $U^p(\om_n, \,\cdot \,)$ is attained at one of the base points for odd $n$, and at the midpoint between two consecutive base points for even $n$.

The case $p = 2, 4, \ldots, 2n-2$ is discussed in \cite{BR17}, Theorem 2. We also give a short proof here. It was shown in \cite{NR12} that for these values of $p$, $\U^p(\omega_n^*, z)$ is constant on $T$. Therefore, for any $n$-point configuration $\om_n$ on~$T$,
\begin{align*}
\M^p(\om_n) &\geq \frac 1 n \sum_{z \in \omega_n^*} \U^p(\omega_n, z)\\
&= \frac 1 n \sum_{v \in \omega_n} \U^p(\omega_n^*, v)\\
&= \M^p(\om_n^*)\, . \qedhere
\end{align*}
\end{proof}

\noindent

For equally distributed point sets, it was proven by Stolarsky \cite{S75b} and by Nikolov and Rafailov \cite{NR12} that $\M^p(\omega_n^*) = \max_{z \in T} \sum_{k= 0}^{n-1} |z - \xi^k|^p$ is (not necessarily uniquely) attained at $z$ which is, depending on $p$, either one of the  base points $\xi^k$ or is the midpoint between two consecutive base points. More precisely, introduce the {\em positive-exponent Riesz energy} of $\om_n \subset T$ defined by
\[
E^p(\omega_n) = \sum_{j, k=1}^n \abs{z_j - z_k}^p
\]
(note that in the previous articles related to Riesz energies, the exponent is taken to be $-p$, therefore the above quantity becomes the negative exponent Riesz energy). For brevity, let $E^p_n = E^p(\om_n^*)$. Theorem 1.2. of \cite{S75b} states that for $0 < p < 2n$, taking $m = \lfloor p/2 \rfloor$,
\begin{equation} \label{Mp1}
\M^p(\omega_n^*) = \begin{cases}
\frac{E_n^p}{n}, & m \text{ odd}\\
\frac{E_{2n}^p}{2n} - \frac{E_n^p}{n}, & m \text{ even}.
\end{cases}
\end{equation}
Furthermore, for $p \geq 2n$, Theorem 2 of \cite{NR12} implies that
\begin{equation} \label{Mp2}
\M^p(\omega_n^*) = \begin{cases}
\frac{E_n^p}{n}, & n \text{ even}\\
\frac{E_{2n}^p}{2n} - \frac{E_n^p}{n}, & n \text{ odd}.
\end{cases}
\end{equation}
The asymptotic expansion of $E_n^p$ was given by Brauchart, Hardin and Saff~\cite{BHS09}:
\[
E_n^p = n^2\widetilde{\mu}_p + \mathcal{O}(n^{1-p}), \quad n \to \infty.
\]
This, along with \eqref{MMtilde}, \eqref{Mp1}, \eqref{Mp2}, and the fact $ \M_n^p  \leq \M^p(\omega_n^*)$, implies that $
\M_n^p \sim n\widetilde{\mu}_{p} = n \binom{p}{p/2}
$ as $n \to \infty$.

Proposition~\ref{prop5} shows that for integer exponents $p$ with $0 < p < 2n$, $\M_n^p = \M^p(\omega_n^*)$.
For these exponents, we provide the explicit value of $E_n^p$ (and, by \eqref{Mp1} and \eqref{Mp2}, of $\M^p(\omega_n^*)$) by a combinatorial argument. If $p = 2m$ for some integer $m<n$,
\begin{align*}
E_n^p &= \sum_{j = 0}^{n-1}\sum_{k=0}^{n-1} \abs{\xi^j - \xi^k}^{2m} = \sum_{j = 0}^{n-1}\sum_{k=0}^{n-1} \abs{1 - \xi^{k-j}}^p = n \sum_{j=0}^{n-1} \abs{1 - \xi^j}^{2m}\\
&= n \sum_{j=0}^{n-1}(1 - \xi^j)^m(1 - \xi^{-j})^m.
\end{align*}
Using binomial expansion gives
\begin{align*}
E_n^p &= n \sum_{j=0}^{n-1}\sum_{r,s=0}^{m} \binom{m}{r} \binom{m}{s} (-1)^{r+s} (\xi^j)^{r-s}\\
&= n \sum_{r,s=0}^m \binom{m}{r} \binom{m}{s} (-1)^{r+s}\sum_{j=0}^{n-1}(\xi^{r-s})^j\\
&= n^2 \sum_{r}^m \binom{m}{r}^2 = n^2 \binom{2m}{m} = n^2 \widetilde{\mu}_p.
\end{align*}
On the other hand, assume that $p$ is odd with $0 < p < 2n$. Noting that for $t \in [0, 2\pi)$,
\[
\abs{1 - e^{it}} = ie^{-it/2}(1 - e^{it}),
\]
it follows that
\begin{align*}
E^p_n &= n \sum_{j=0}^{n-1} \abs{1 - \xi^j}^p = n \, i^p\sum_{j=0}^{n-1}\xi^{-pj/2}(1-\xi^j)^p\\ &= n \, i^p \sum_{j=0}^{n-1}\sum_{s=0}^{p}\binom{p}{s}(-1)^s \xi^{(s-p/2)j}\\
&= n \, i^p \sum_{s=0}^{p}\binom{p}{s}(-1)^s \sum_{j=0}^{n-1}\xi^{(s-p/2)j} \, .
\end{align*}
Now, using that $p$ is odd, we have
\begin{align*}
E^p_n &= n \, i^p \sum_{s=0}^{p}\binom{p}{s}(-1)^s \frac{\xi^{n(s-p/2)}-1}{\xi^{s-p/2}-1}\\
&= 2n \, i^p \sum_{s=0}^{p}\binom{p}{s} \frac{(-1)^s}{1-\xi^{s-p/2}},
\end{align*}
since $\xi^{ns} = 1$ and $\xi^{-np/2}=-1$. Observing the symmetry of this sum about $p/2$, we can compute
\begin{align*}
E^p_n &= 4n \, i^{p+1} \, \im\left(\sum_{s=0}^{\lfloor p/2 \rfloor}\binom{p}{s} \frac{(-1)^s}{1 -\xi^{s-p/2}}\right)\\
&= n \, (-1)^{\frac{p-1}{2}} \sum_{s=0}^{p}\binom{p}{s} (-1)^s\cot\left(\left(\frac{p}{2} - s\right)\frac{\pi}{n}\right).
\end{align*}
For $p=1$, this gives $E_n^p = 2n\cot(\frac{\pi}{2n}) \sim \frac{4n^2}{\pi} = n^2 \widetilde{\mu}_{1}$. In general, as $n \to \infty$, the Taylor expansion of the cotangent gives
\[
    E_n^p = n\,\abs{\sum_{s=0}^p \binom{p}{s} (-1)^s \frac{n}{\pi(\frac{p}{2}-s)} + \mathcal{O}(1/n)} =n^2 \widetilde{\mu}_{p} + \mathcal{O}(1),
\]
where the second equality follows from a series computation described in Proposition 2.3 of \cite{G07}.

We conclude the paper by restating the following natural conjecture of Bosuwan and Ruengrot \cite{BR17}, which is also supported by our numerical experiments:

\begin{conj}
For any $n \geq 1$, the vector systems achieving $\M^p_n$ are equally distributed on the circle for every $p \in \R^+ \setminus \{ 2, 4, \ldots, 2n - 2 \}$.
\end{conj}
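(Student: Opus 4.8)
The plan is to recast the problem in Fourier terms on $\T$ and to isolate exactly why the exponents $2,4,\dots,2n-2$ are exceptional. Writing $z_i = e^{i\theta_i}$ and $K(t) = |1-e^{it}|^p = (2\sin(t/2))^p$, the kernel expands as $K(t)=\sum_{k\in\mathbb{Z}}\widehat c_k\, e^{ikt}$ with
\[
\widehat c_k = (-1)^k\binom{p}{\frac{p}{2}+k},
\]
so that $\widehat c_0=\widetilde{\mu}_p$. Introducing the moments $S_k=\sum_{i=1}^n e^{ik\theta_i}$, the potential becomes $\U^p(\om_n,e^{i\theta})=\sum_k \widehat c_k\,\overline{S_k}\,e^{ik\theta}$, and an equidistributed set is characterised by $S_k=0$ for all $k\not\equiv 0\pmod n$. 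I would first record the elementary fact that $K(t)$, viewed as a function of the squared chordal distance $x=|z-z_i|^2=4\sin^2(t/2)\in[0,4]$, equals $x^{p/2}$; this is a polynomial in $x$ precisely when $p/2\in\mathbb{N}$, and of degree $<n$ precisely when $p\in\{2,4,\dots,2n-2\}$. For exactly these $p$ all nonzero harmonics $\widehat c_{mn}$ vanish, the equidistributed potential is constant, and the optimiser is non-unique; this both explains the degeneracy and forces their exclusion from the conjecture.

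For the remaining exponents I would split into the two regimes already visible in the paper and try to bridge them. For $0<p\le 1$, Proposition~\ref{prop5} gives uniqueness through convexity of $g(t)=1-|\sin(t/2)|^p$ on the circle together with Theorem~1 of \cite{HKS13}; computing $g''$ shows $g$ is convex exactly when $p\cos^2(t/2)\le 1$, so this route extends verbatim only up to $p=1$. The natural idea for all non-exceptional $p$ is to exploit the Bernstein representation $x^{p/2}=c_p\int_0^\infty(1-e^{-xt})\,t^{-p/2-1}\,dt$ for non-integer $p/2$, which writes the kernel as a positive mixture of the tractable Gaussian-type kernels $1-e^{-xt}$; if $\om_n^*$ minimises the polarisation for each such elementary kernel, with strict inequality off the equidistributed family for at least one $t$, then the mixture inherits strict optimality and uniqueness. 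In parallel, for odd integer $p$ one can use the explicit cotangent evaluation of $E_n^p$ derived above together with \eqref{Mp1}--\eqref{Mp2} to pin down $\M^p(\om_n^*)$ exactly, reducing uniqueness to a finite, $p$-dependent inequality.

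To certify global optimality I would set up the linear-programming (dual) bound: exhibiting a probability measure $\nu$ on $\T$ with $(K*\nu)(w)\ge \tfrac1n\M^p(\om_n^*)$ for all $w$ and equality at the points of $\om_n^*$ yields $\M^p(\om_n)\ge\int \U^p(\om_n,\cdot)\,d\nu\ge\M^p(\om_n^*)$ for every configuration. Complementary slackness forces $\nu$ to be supported on $\arg\max\U^p(\om_n^*,\cdot)$, and a short Fourier computation shows that if $\nu$ is taken uniform on that (rotated equidistributed) finite maximiser set, the bound collapses to $\tfrac1n\min\U^p(\om_n^*,\cdot)\ge\tfrac1n\max\U^p(\om_n^*,\cdot)$, which holds only in the degenerate even-integer case.

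The main obstacle is precisely this failure of the naive certificate for $p>1$: the kernel $(2\sin(t/2))^p$ is genuinely non-convex in the geodesic variable, its second derivative changing sign at $\cos^2(t/2)=1/p$, so the simple atomic dual measure provably cannot be tight. Overcoming this seems to require either a non-atomic dual measure adapted to the smooth, non-constant profile of $\U^p(\om_n^*,\cdot)$, or a variational argument: first showing that $\om_n^*$ is the unique critical configuration of the min--max functional, by verifying that $0$ lies in the convex hull of the gradients $\nabla_{\om}\U^p(\om_n,\theta_j^*)$ at the finitely many maximisers $\theta_j^*$ (which holds by symmetry for $\om_n^*$), and then upgrading criticality to a global minimum through a second-order analysis in the moment variables $S_k$. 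Establishing this last global step for all non-exceptional $p>1$ is where I expect the genuine difficulty to lie, consistent with the conjecture remaining open.
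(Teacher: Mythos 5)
This statement is the closing \emph{conjecture} of the paper (restated from Bosuwan and Ruengrot \cite{BR17}); the paper offers no proof of it, only numerical support together with the partial cases settled in Proposition~\ref{prop5} ($0<p\leq 1$ via convexity and \cite{HKS13}, and the degenerate even exponents $p=2,4,\dots,2n-2$ via the averaging/constant-potential argument). Your proposal is likewise not a proof, and you say so yourself in the last sentence; so the honest verdict is that there is nothing to certify. Your framing is largely sound: the Fourier coefficients $\widehat c_k=(-1)^k\binom{p}{p/2+k}$ of $|1-e^{it}|^p$, the identification of $2,4,\dots,2n-2$ as exactly the exponents for which the kernel is a polynomial of degree $<n$ in the squared chord (hence constant equidistributed potential and non-unique optimisers), the computation that $g(t)=1-|\sin(t/2)|^p$ is convex precisely when $p\cos^2(t/2)\leq 1$, and the diagnosis of why the atomic dual measure certificate collapses outside the even-integer case all agree with what is known and with the paper's own discussion.

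The concrete gaps are these. First, the Bernstein representation $x^{s}=c_s\int_0^\infty(1-e^{-xt})\,t^{-s-1}\,dt$ converges only for $0<s<1$ (near $t=0$ the integrand behaves like $t^{-s}$), so as written it cannot reach any $p>2$; you would need higher-order finite differences of the exponential, and the resulting signed mixture destroys the positivity you rely on. Second, and more fundamentally, polarization is a min--max and is not linear in the kernel: for a mixture $K=\int K_t\,d\mu(t)$ one only gets $\M^{K}(\om_n)\leq\int \M^{K_t}(\om_n)\,d\mu(t)$, which is the wrong direction for transferring the lower bound $\M^{K_t}(\om_n)\geq\M^{K_t}(\om_n^*)$ to $K$ unless all the elementary kernels share a common maximiser for every configuration $\om_n$ --- they do not (even for $\om_n^*$ the maximiser alternates between base points and midpoints depending on the kernel). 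This mixture step is exactly what works for \emph{energy} minimisation but fails for polarization, and it is why \cite{HKS13} needs convexity of the kernel itself rather than a decomposition into convex pieces. Third, the variational endgame --- upgrading criticality of $\om_n^*$ to a global minimum of the min--max functional for all non-exceptional $p>1$ --- is precisely the open problem; no mechanism is offered for it beyond the (correctly discarded) naive LP certificate. So the proposal is a reasonable map of the difficulty, consistent with the paper's decision to leave this as a conjecture, but it does not close any case beyond those already covered by Proposition~\ref{prop5}.
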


\noindent
{\bf Acknowledgement.} This research was done under the auspices of the Budapest Semesters in Mathematics program. The authors are grateful to I. Bárány for fruitful discussions and to A. Polyanskii for communicating the correct form of Conjecture 1 to us.

\end{document}